\theoremstyle{definition}
\newtheorem{definition}{Definition}[section]
\theoremstyle{plain}
\newtheorem{theorem}{Theorem}[section]
\theoremstyle{remark}
\begin{document}

\title{Uniform persistence in a prey-predator model with a diseased predator}
\author{Tobia Dond\`e}
\date{}

\maketitle

\begin{abstract}
Following the well-extablished mathematical approach to persistence and its developments contained in \cite{BaMaRe14} we give a rigorous theoretical explanation to the numerical results obtained in \cite{BaHi13} on a certain prey-predator model with functional response of Holling type II equipped with an infectious disease in the predator population. 

The proof relies on some repelling conditions that can be applied in an iterative way on a suitable decomposition of the boundary. A full stability analysis is developed, showing how the ``invasion condition'' for the disease is derived. Some counterexamples and possible further investigations are discussed.

\noindent
\textbf{Keywords:} Uniform persistence, Infectious disease, Basic reproduction number, Prey-predator model

\noindent
\textbf{MSC 2010:} 37C70, 92D30
\end{abstract}

\section{Introduction}
In \cite{BaHi13} the Authors introduce a model displaying an interesting asymptotic behaviour with peculiar consequences on the basic reproduction number and its reliability in predicting the evolution of the system. Among others, they study a system featuring a prey and a predator population with an infectious disease affecting the latter. It is shown by numerical means that under some constraints on the parameters the basic reproduction number associated with the non trivial stationary equilibrium in the disease-free case fails to be a marker for the spread of the disease. Indeed, as under those hypotheses the equilibrium is not stable, it seems natural to assume that an analysis focused on such point may not reflect the asymptotic dynamics of the system whereas the stable limit cycle that bifurcates from the unstable equilibrium may be the right place to investigate for the long-term behaviour. However, to the best of our knowledge no formal explanation for this insight has been proposed yet, and that is the aim of this work. 

We make use of the mathematical theory of persistence to reach a result that includes and justifies the numerical evidence of \cite{BaHi13}. To this goal we develop a full stability analysis of the model, which is of help in describing the dynamics on the boundary. We consequently find that the repelling conditions that assure the global uniform persistence of the system reduce to an improved ``invasion condition'' for the disease: this is quite interesting in light of the large amount of literature regarding the basic reproduction number $R_0$ and its role as a threshold value for disease spreading.

Nowadays, $R_0$ is a important tool in studying biological and ecological models. Introduced in the well-known Kermack-McKendrick model, it has found many applications and has been generalised in many ways, one for all \cite{DiHeMe90} (for a historical account see also \cite{Ba11}). The easiest interpretation calls $R_0$ the number of secondary cases produced by a infectious individual in a completely susceptible population. When the model displays periodic coefficients the meaning of $R_0$ features in some cases an average quantity over the period, in other cases it encloses more specific informations on the model \cite{BaAi12}. The system we take into account is autonomous but the disease-free dynamics is ruled by a limit cycle, which acts much like a periodic perturbation. Under this point of view the quantity $\overline{R_0}$ evaluated over the period of the limit cycle is close to the one introduced in \cite{BaGu06} and \cite{WaZh08}. 

The role of $R_0$ as an invasion marker motivates its appearances in some survival conditions such as persistence, which is itself a common concept in ecological models. The core idea is the coexistence of all the featuring species and the aim is to give conditions to avoid the extinction of any of them (for interesting and various applications along with some general results on the side of biology and epidemics see \cite{BaMaRe12} and \cite{GaRe16}). However, persistence comes to be a weak assumption when looking at the asymptotic dynamics: for such it is commonly taken into account its stronger counterpart, uniform persistence, which bounds dynamics away from the boundary.

Historically there have been two main mathematical approaches to persistence, with a peak of works at the end of the 80s. Some authors focus on the study of the flow on the boundary, obtaining some acyclic conditions involving chains of suitable decompositions of the boundary (see for instance \cite{BuFrWa86}) while other authors' key feature is a Lyapunov-like function that guarantees the repulsiveness of the boundary (see \cite{Hu84} and \cite{Fo88}). The two approaches find some sort of unification in \cite{Ho89}, being combined through the notion of Morse decomposition of the boundary. For a thorough account on these studies on persistence see \cite{Wa91}. We point out that concepts like uniform persistence or permanence can be defined in quite general frameworks, as done in \cite{FoGi15}, but for the applications sake our theoretical settings will be restricted to locally compact metric spaces.

This work deals with one of the two main models proposed in \cite{BaHi13}. After a theoretical preamble we analyse the model and its flow on the boundary and test the conditions for uniform persistence mainly given in \cite{BaMaRe14}. We discuss the result, which includes and fully supports the evidence reported in \cite{BaHi13}, pointing out also some differences and counterexamples as well as some open problems. The Appendixes deal with the disease-free model, for which existence and stability of the equilibria are illustrated, and a detailed stability analysis of the full model.

\section{Preliminaries}\label{sec:prel}
Let $(\mathcal{X},d)$ be a locally compact metric space and let $\pi$ be a semidynamical system defined on a closed subset $X\subset\mathcal{X}$, i.e. a continuous map
\begin{displaymath}
\pi:X\times\mathbb{R}_+\to X
\end{displaymath}
such that for all $x\in X$ and $t,s\in\mathbb{R}_+=[0,+\infty[$ the following hold:
\begin{align*}
&\pi(x,0)=x\,,\\
&\pi(\pi(x,t),s)=\pi(x,t+s).
\end{align*}
Introducing the $\omega$-limit of a point $x\in X$
\begin{displaymath}
\omega(x):=\{y\in X:\ \exists\, 0<t_n\nearrow\infty\textrm{ such that }\pi(x,t_n)\to y\}
\end{displaymath}
we will assume that $\pi$ is \emph{dissipative}, i.e. for each $x\in X$ it holds $\omega(x)\neq\emptyset$ and $\bigcup_{x\in X}\omega(x)$ has compact closure in $X$.

We fix some notions that will be used throughout the paper.
\begin{definition}\label{definitions}
A subset $Y$ of $X$ is
\begin{itemize}
\item[$\circ$] \emph{forward invariant} with respect to $\pi$ if and only if
\begin{displaymath}
\pi(Y,t)\subseteq Y\quad \forall\, t\geq0;
\end{displaymath}
\item[$\circ$] \emph{isolated} in the sense of \cite{Co78} if it is forward invariant and there exists a neighbourhood $\mathcal{N}(Y)$ of $Y$ such that $Y$ is the maximal invariant set in $\mathcal{N}(Y)$;
\item[$\circ$] \emph{uniform repeller} with respect to $\pi$ if $Y$ is compact and if it exists $\eta>0$ such that
\begin{displaymath}
\liminf_{t\to+\infty}\,d(Y,\pi(x,t))\geq\eta\qquad\forall\, x\in X\setminus Y.
\end{displaymath}
\end{itemize}
The semidynamical system $\pi$ is called \emph{uniformly persistent} if $\partial X$ is a uniform repeller. If $\pi$ is also dissipative then there exists a compact global attractor in the interior of $X$ and $\pi$ is said to be \emph{permanent}.
\end{definition}

We make use of two results coming from the two approaches to persistence. Regarding the ``flow on the boundary'' point of view, a key concept introduced in \cite{Ho89} is to check some necessary and sufficient conditions for uniform repulsiveness on a Morse decomposition of the set.
\begin{definition}\label{morse}
Let $M$ be a closed subset of $X$. A \emph{Morse decomposition} for $M$ is a finite collection $\{\mathcal{M}_1,\mathcal{M}_2,\ldots,\mathcal{M}_n\}$ of pairwise disjoint, compact, forward invariant sets such that for each $x\in M$ one of the following holds:
\begin{itemize}
\item[a)] $x\in \mathcal{M}_i$ for some $i=1,2,\ldots,n$
\item[b)] $\omega(x)\subseteq \mathcal{M}_i$ for some $i=1,2,\ldots,n$.
\end{itemize}
\end{definition}
Recalling the definition of stable manifold of a subset $Y$ of $X$
\begin{displaymath}
W^s(Y):=\{x\in X:\ \omega(x)\subseteq Y\}.
\end{displaymath}
the repelling condition that comes from \cite{BuFrWa86} is stated in \cite{Ho89} as follows.
\begin{theorem}\label{repelmorse}
Let $M$ be a closed forward invariant subset of $X$. Given a Morse decomposition of $M$ such that each $\mathcal{M}_i$ is isolated in $X$, $M$ is a repeller if and only if
\begin{displaymath}
W^s(\mathcal{M}_i)\subseteq M\qquad\forall\, i=1,2,\ldots,n.
\end{displaymath}
\end{theorem}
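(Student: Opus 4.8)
The plan is to prove the two implications separately, with the forward direction being elementary and the converse carrying all the weight.

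\medskip
\noindent\emph{Necessity.} I would argue directly. Suppose $M$ is a uniform repeller with constant $\eta>0$, fix an index $i$, and take any $x\in W^s(\mathcal{M}_i)$. By definition $\omega(x)\subseteq\mathcal{M}_i\subseteq M$, and dissipativity makes $\omega(x)$ a nonempty compact subset of $M$; in particular some sequence $\pi(x,t_n)$ converges to a point of $M$, so that $\liminf_{t\to+\infty}d(M,\pi(x,t))=0$. Were $x\notin M$, the repeller inequality would force this $\liminf$ to be at least $\eta$, a contradiction. Hence $x\in M$, and $W^s(\mathcal{M}_i)\subseteq M$.

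\medskip
\noindent\emph{Sufficiency, reduction to weak repulsion.} For the converse I would assume $W^s(\mathcal{M}_i)\subseteq M$ for every $i$ and argue by contradiction, supposing $M$ is not a uniform repeller. The first step is to recast the hypothesis as a \emph{weak} repulsion property: no point $y\in X\setminus M$ satisfies $\omega(y)\subseteq M$. Indeed, if such a $y$ existed, then $\omega(y)$ would be a nonempty, compact, invariant and internally chain transitive subset of $M$; since the $\mathcal{M}_i$ form an (acyclic) Morse decomposition of $M$, an internally chain transitive set must lie in a single element, so $\omega(y)\subseteq\mathcal{M}_i$ for one $i$, whence $y\in W^s(\mathcal{M}_i)\subseteq M$, contradicting $y\notin M$. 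This containment of a chain transitive set in a single Morse set is standard Conley theory and can be quoted from \cite{Ho89}.

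\medskip
\noindent\emph{Sufficiency, the hard step.} It then remains to upgrade weak repulsion to uniform repulsion, and this is where I expect the real difficulty. Using that $M$ is not a uniform repeller together with dissipativity, I would run a compactness argument in the spirit of \cite{BuFrWa86,BaMaRe14}: choose points $x_k\in X\setminus M$ whose orbits return arbitrarily close to $M$, confine them to the compact global attractor, and extract a limiting full orbit. The delicate point is to guarantee simultaneously that this limit orbit stays \emph{off} $M$, so that it passes through some $y\notin M$, while its $\omega$-limit set is drawn back \emph{into} $M$. This is achieved by invoking the Butler--McGehee lemma at each isolated Morse set met by the limit of the orbits: isolatedness peels the limiting $\omega$-limit set away from $\mathcal{M}_i$ by a definite amount and prevents it from being absorbed. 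The outcome is a point $y\in X\setminus M$ with $\emptyset\neq\omega(y)\subseteq M$, which directly contradicts the weak repulsion established above; hence $M$ is a uniform repeller.

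\medskip
\noindent I would flag the orbit extraction of the last step as the main obstacle: keeping the limiting orbit outside $M$ while forcing its asymptotics into $M$ requires careful bookkeeping of the isolating neighbourhoods of the $\mathcal{M}_i$, and it is precisely here that the isolatedness of each Morse set and the acyclicity implicit in the Morse decomposition enter. Everything else is either elementary (necessity) or a citable structural fact about $\omega$-limit sets and Morse decompositions (the reduction step).
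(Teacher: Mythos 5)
A preliminary point of comparison: the paper does not prove Theorem \ref{repelmorse} at all --- it is imported from \cite{Ho89}, where the repelling condition of \cite{BuFrWa86} is recast in Morse-theoretic language --- so your attempt must stand on its own rather than be matched against an internal argument. Your necessity direction is complete and correct. The reduction of sufficiency to weak repulsion is also the right first move and consistent with Hofbauer's framework, but it rests on the claim that an internally chain transitive subset of $M$ lies in a single Morse set. That is a fact about genuine (ordered, hence acyclic) Morse decompositions in Conley's sense; the paper's Definition \ref{morse} is strictly weaker --- it requires only $\omega(x)\subseteq\mathcal{M}_i$, with no ordering or $\alpha$-limit condition --- and under that weakened definition the claim fails. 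A heteroclinic cycle between two isolated equilibria satisfies Definition \ref{morse}, yet the whole cycle is chain transitive and contained in no single $\mathcal{M}_i$; this is precisely the classical situation in which $W^s(\mathcal{M}_i)\subseteq M$ holds for every $i$ while $M$ still attracts interior orbits, so sufficiency is simply false without acyclicity. Your parenthetical ``(acyclic)'' silently imports the stronger definition: you must either declare that you are using Conley's ordered notion (as \cite{Ho89} does) or add acyclicity as an explicit hypothesis.

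The more serious issue is that the step you yourself flag as the main obstacle \emph{is} the theorem. Upgrading weak repulsion to uniform repulsion via a limiting-orbit argument with Butler--McGehee at the isolated sets is the entire content of \cite{BuFrWa86}, and your sketch asserts its conclusion rather than deriving it: the Butler--McGehee analysis does not directly yield a point $y\notin M$ with $\emptyset\neq\omega(y)\subseteq M$; its genuine output is a dichotomy --- either such a $y$ exists, or the escaping $\omega$-limit sets string the $\mathcal{M}_i$ together into a cyclic chain of connecting orbits --- and excluding the second alternative is exactly where acyclicity must be invoked a second time, with the bookkeeping of isolating neighbourhoods you allude to but do not perform. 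Moreover, since $\pi$ is only a semiflow, ``extracting a limiting full orbit'' requires backward continuations, which exist on the compact global attractor but must be constructed (and the attendant $\alpha$-limit sets handled) explicitly. As it stands, the proposal is a correct road map with the elementary half done, but the hard half is an outline whose pivotal step is described rather than proved; completing it would amount to reproducing the proof of \cite{BuFrWa86}/\cite{Ho89} --- which is presumably why the paper quotes the theorem instead of proving it.
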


On the other hand, the main result on the ``Lyapunov-like function'' side, first enunciated in \cite{Hu84} and later improved by \cite{Fo88}, is given here in the spirit of \cite{BaMaRe14}, with a slight modification in order to derive suitable conditions for our model.
\begin{theorem}\label{mainthm}
Let $M\subset X$ be closed and such that $X\setminus M$ is forward invariant. If $X$ contains a compact global attractor $K$ and there exists a closed neighbourhood $V$ of $M$ and a continuous function $P:X\to\mathbb{R}_+$ such that
\begin{itemize}
\item[a)] $P(x)=0 \Longleftrightarrow x\in M$
\item[b)] $\forall\, x\in V\setminus M\quad\exists\, t_x>0:\quad P(\pi(x,t_x))>P(x)$
\end{itemize}
then $M$ is a uniform repeller. If in addition $M$ is forward invariant then we can replace condition b) with
\begin{itemize}
\item[b')] $\exists\, \psi:X\to\mathbb{R}$ continuous and bounded below such that
\begin{itemize}
\item[i)] $\dot{P}(x)\geq P(x)\psi(x)\quad\forall\, x\in V\setminus M$
\item[ii)] $\sup_{t>0}\int_0^t\psi(\pi(x,s))ds>0\quad\forall\, x\in\overline{\omega(M)}$.
\end{itemize}
\end{itemize}
\end{theorem}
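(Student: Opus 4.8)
The plan is to prove both implications by contradiction, in each case manufacturing a forward orbit that is trapped near $M$ and along which $P$ never strictly increases, in direct conflict with the hypotheses. Throughout I work inside the compact global attractor $K$, so that all orbits are precompact and all $\omega$-limit sets are nonempty compact subsets of $K$; by continuity of $P$ and compactness I first fix $\varepsilon>0$ with $\{x\in K:P(x)\le\varepsilon\}\subseteq V$, which is possible since $M=P^{-1}(0)\subseteq\operatorname{int}V$. This translates ``$P$ small'' into ``inside the neighbourhood $V$'', where a)--b) (or b$'$) are available.

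For the first part I would show that no orbit issuing from $X\setminus M$ can converge to $M$, and then upgrade this to the uniform bound. Suppose $x\in X\setminus M$ has $\omega(x)\subseteq M$. Then $P(\pi(x,t))\to0$, and since $\omega(x)\subseteq M\subseteq\operatorname{int}V$ the orbit eventually stays in $V$, say for $t\ge T$; moreover $\pi(x,T)\notin M$ because $X\setminus M$ is forward invariant. The map $t\mapsto P(\pi(x,t))$ on $[T,\infty)$ is continuous, strictly positive at $T$ and tends to $0$, hence attains its supremum at some $t^\star\ge T$. The point $y=\pi(x,t^\star)\in V\setminus M$ then satisfies $P(\pi(y,s))\le P(y)$ for all $s\ge0$, contradicting b). Thus $\limsup_{t}P(\pi(x,t))>0$ for every $x\in X\setminus M$ (weak repulsion). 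To pass to a uniform repeller I would invoke the standard weak$\Rightarrow$strong persistence mechanism under dissipativity (in the spirit of \cite{BaMaRe14}): if the uniform $\liminf$ bound failed, a sequence $x_n$ whose orbits enter the $\tfrac1n$-neighbourhood of $M$ would, by precompactness in $K$ and continuity of the semiflow, produce in the limit an entire orbit pinned arbitrarily close to $M$ and again trapped, contradicting b) exactly as above.

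For the second part I would redo the contradiction directly from b$'$, rather than recover b). While an orbit stays in $V\setminus M$, forward invariance of $X\setminus M$ keeps $P>0$, so i) integrates by a Gronwall comparison to
$$P(\pi(x,t))\ge P(x)\exp\!\Big(\int_0^t\psi(\pi(x,s))\,ds\Big).$$
If $M$ were not a uniform repeller, the extraction above yields $y\in V\setminus M$ whose orbit is trapped in $V$, converges to $M$, and has $P(\pi(y,t))\le P(y)$ for all $t$; the estimate then forces $F(t):=\int_0^t\psi(\pi(y,s))\,ds\le0$ for every $t$. Picking $t_n\to\infty$ with $F(t_n)\to\limsup_t F(t)$ and $\pi(y,t_n)\to w\in\omega(y)\subseteq M$ gives $\int_0^\tau\psi(\pi(w,s))\,ds\le0$ for all $\tau$; selecting once more a $\limsup$-realising point inside $\omega(w)\subseteq\overline{\omega(M)}$ produces a point of $\overline{\omega(M)}$ with all forward $\psi$-integrals nonpositive, contradicting ii). Here the hypothesis that $\psi$ be bounded below, together with uniform continuity of $\psi$ and of the flow on $K$, is what legitimises passing these integrals to the limit.

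The main obstacle is precisely this extraction of a limiting trapped orbit: condition b) only guarantees an \emph{eventual} increase of $P$ whose size degenerates as one approaches $M$, so there is no uniform additive gain to iterate, and one must instead compress a sequence of near-boundary orbits into a single limiting orbit on $K$ before the contradiction can be sprung. In the b$'$ setting the analogous difficulty is transporting the nonpositivity of the forward average of $\psi$ from an interior orbit onto the boundary attractor $\overline{\omega(M)}$, where alone ii) is assumed; both steps lean essentially on the compact global attractor and on uniform continuity, which is why dissipativity is indispensable.
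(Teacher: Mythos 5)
Your argument, in both halves, hinges on extracting from the failure of uniform repulsion an orbit $y\in V\setminus M$ that is \emph{trapped in $V$, converges to $M$}, and along which $P$ never exceeds $P(y)$. This is the genuine gap: failure of the uniform bound $\liminf_{t\to\infty}d(M,\pi(x,t))\geq\eta$ only produces points whose orbits dip within $1/n$ of $M$ infinitely often; such orbits need not converge to $M$, and your maximum-of-$P$ device applies only when $\omega(x)\subseteq M$. What your contradiction scheme actually proves is weak repulsion, $W^s(M)\subseteq M$, and the ``standard weak$\Rightarrow$strong mechanism'' you invoke to close the gap does not exist in the form you use it: pointwise non-convergence to $M$ does not upgrade to uniform repulsion under dissipativity alone, and the upgrades in \cite{Hu84}, \cite{Fo88} and \cite{BaMaRe14} are driven by the Lyapunov condition b) itself (quantitatively: orbits entering a small sublevel set of $P$ must exit a larger one, with controlled re-entry), not by weak repulsion. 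This is structurally fatal for your second half, because you deliberately bypass b): having never established b), you cannot hand off to the cited results the way the paper does. Indeed the paper's own proof consists \emph{only} of showing that b$'$) implies b) when $M$ is forward invariant --- extending ii) from $\overline{\omega(M)}$ to all of $M$ by lower semicontinuity of $y\mapsto\sup_{t>0}\int_0^t\psi(\pi(y,s))ds$, compactness of $\overline{\omega(M)}\subseteq K$, and an iteration accumulating gains $n\delta$, then integrating i) to get $P(\pi(x,t_x))>P(x)$ --- after which the uniform-repeller conclusion is quoted from \cite{BaMaRe14} with hypothesis b) in hand. Your first-half lemma (the supremum point of $t\mapsto P(\pi(x,t))$ on a trapped tail violates b)) is correct and clean, but it is only the weak-repulsion half of that cited machinery.

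There is also a quantitative hole in your transport of nonpositive averages to $\overline{\omega(M)}$: you pick $t_n\to\infty$ with $F(t_n)\to L:=\limsup_tF(t)$ and conclude $\int_0^\tau\psi(\pi(w,s))ds=\lim_n\bigl(F(t_n+\tau)-F(t_n)\bigr)\leq L-L=0$, but nothing prevents $F(t)\to-\infty$ along the trapped orbit (e.g.\ $\psi<0$ on a neighbourhood of $\omega(y)$), in which case $L=-\infty$ and the estimate is vacuous; the same defect infects the second descent step from $w$ to $v\in\omega(w)$. The robust repair is precisely the paper's forward iteration run on the compact invariant set $\omega(y)$: if every point of $\omega(y)$ had positive sup-integral, lower semicontinuity and compactness would give $\delta>0$ on an open neighbourhood eventually containing the orbit of $y$, forcing $F$ to exceed any $n\delta$ and contradicting $F\leq0$. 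Two smaller remarks: your two-step descent $y\mapsto w\in\omega(y)\subseteq M\mapsto v\in\omega(w)\subseteq\overline{\omega(M)}$ is a good catch, since $\omega(y)$ need not lie in $\omega(M)$ and only the second step lands where ii) is assumed; and the boundedness below of $\psi$ is not what legitimises passing the integrals over compact time windows to the limit --- continuity of $\psi$ and of the semiflow suffices for that.
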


The derivative that appears in \textit{ii)} is to be read in the sense of differentiation along the orbits, or Dini derivative, see for instance \cite{La76}. In applications, when $P$ is differentiable on a open subset of $\mathbb{R}^N$ containing $X$ and $\pi$ comes from a ODE of the type $x'=f(x)$, we can write
\begin{displaymath}
\dot{P}(x)=\langle D P(x),f(x)\rangle.
\end{displaymath}

\begin{proof}
We show that \textit{b')} implies \textit{b)} when $M$ is forward invariant: the rest of the proof is carried on in \cite{BaMaRe14} within the same framework. For our purpose we use the proof scheme proposed in \cite{Hu84} and \cite{Fo88}.

At first we shall prove that \textit{ii)} holds true for all $x\in M$. Since $\psi$ is continuous, the map $y\mapsto \sup_{t>0}\int_0^t\psi(\pi(y,s))ds$ is lower semicontinuous. Now, $\overline{\omega(M)}$ is compact, being closed and contained in the compact global attractor $K$: condition \textit{b')} and the permanence of sign guarantee that there exist $\delta>0$ and an open neighbourhood $W$ of $\overline{\omega(M)}$ such that 
\begin{displaymath}
\sup_{t>0}\int_0^t\psi(\pi(y,s))ds>\delta\quad\forall\, y\in W.
\end{displaymath}
Fixed $x\in M$, by definition of $\overline{\omega(M)}$ there exists $t_0$ such that $\pi(x,t)\in W$ for all $t\geq t_0$, so let $x_1:=\pi(x,t_0)$. By definition $x_1\in W$ and by the previous inequality there exists $t_1$ such that $\int_0^{t_1}\psi(\pi(x_1,s))ds>\delta$. If we call $x_2:=\pi(x_1,t_1)=\pi(x,t_0+t_1)\in W$ we can iterate the procedure and come at last to
\begin{align*}
\sup_{t>0}\int_0^t\psi(\pi(x,s))ds&\geq\int_0^{t_0}\psi(\pi(x,s))ds+\sum_{k=1}^{n}\int_{t_{k-1}}^{t_{k-1}+t_k}\psi(\pi(x,s))ds\\
&\geq\int_0^{t_0}\psi(\pi(x,s))ds+\sum_{k=1}^{n}\int_{0}^{t_k}\psi(\pi(x_k,s))ds\\
&>\int_0^{t_0}\psi(\pi(x,s))ds+n\delta.
\end{align*}
By choosing $n$ large enough we conclude that \textit{ii)} holds indeed for any $x\in M$.

Condition \textit{ii)} and the continuity of $\psi$ implies that there exists an open neighbourhood $U$ of $M$ such that
\begin{displaymath}
\forall\, x\in U\quad\exists\, t_x>0:\quad\int_0^{t_x}\psi(\pi(x,s))ds>0.
\end{displaymath}
Applying condition \textit{i)} on a closed neighbourhood $V'$ of $M$ contained in $V\cap U$ returns
\begin{displaymath}
0<\int_0^{t_x}\psi(\pi(x,s))ds\leq\int_0^{t_x}\dfrac{\dot{P}(\pi(x,s))}{P(\pi(x,s))}ds=\log\dfrac{P(\pi(x,{t_x}))}{P(x)}
\end{displaymath}
for all $x\in V'\setminus M$, which of course implies \textit{b)}. Note that the second inequality follows from \textit{i)} even in the case of a general Dini derivative thanks to some general differential inequalities, see \cite{LaLe69}.
\end{proof}

\section{Model and flow analysis}\label{sec:mod}
The model proposed in \cite{BaHi13} accounting for the case of a prey-predator population with a diseased predator is described by the following nonlinear au\-ton\-o\-mous system:
\begin{align}\label{bahipred}
\begin{split}
\begin{cases}
&\dot{N}(t)=rN(t)(1-N(t))-\dfrac{N(t)(S(t)+I(t))}{h+N(t)}\\
&\dot{S}(t)=\dfrac{N(t)(S(t)+I(t))}{h+N(t)}-mS(t)-\beta S(t)I(t)\\
&\dot{I}(t)=\beta S(t)I(t)-(m+\mu)I(t).
\end{cases}
\end{split}
\end{align}
$N(t)$ is the prey population, $S(t)$ is the susceptible predator population and $I(t)$ the infected predator one at time $t\geq0$. The constants $r,h,m,\mu$ and $\beta$ are positive and denote respectively the logistic growth rate of the prey, the half-saturation constant of the Holling type II functional response for the predator, the exponential decay rate of the predator, the mortality increase due to the disease and the disease transmissibility. We will from now on drop the time dependence when no confusion arises.

To fit the model within the theoretical framework of the previous section we set $\mathcal{X}=\mathbb{R}^3$ and $X=\{(x,y,z)\in\mathcal{X}:\,x,y,z\geq0\}=\mathbb{R}_+^3$ is the positive orthant, which is closed. With some abuse of notation let $\pi(x,t):X\times\mathbb{R}_+\to X$ denote the solution of \eqref{bahipred} at time $t$ with initial condition given by $x=(N_0,S_0,I_0)$. To prove that $\pi$ is dissipative we compute the flow against the outward normal vector $(1,1,1)$ to $\{(x,y,z)\in X:\ x+y+z=k\}$ and simple calculations show that the result is negative if
\begin{equation}\label{kbound}
k>1+\dfrac{r}{4m}\,,
\end{equation}
thus once chosen $k$ large enough the flow eventually enters the set 
\begin{displaymath}
X_1:=\{(x,y,z)\in\mathbb{R}_+^3:\ x+y+z\leq k\}.
\end{displaymath}

We now restrict our analysis to $X_1$ and move to the study of the flow on its boundary $\partial X_1$. For convenience sake we split the boundary in three faces (for persistence sake the face $\{(x,y,z)\in X_1:\ x+y+z=k\}$ is of no interest):
\begin{align*}
&\partial_xX_1=\{(x,y,z)\in X_1:\ x=0\}\\
&\partial_yX_1=\{(x,y,z)\in X_1:\ y=0\}\\
&\partial_zX_1=\{(x,y,z)\in X_1:\ z=0\}
\end{align*}
For the detailed study of the flow on $\partial_zX_1$ (disease-free case) we refer to Appendix \ref{sec:A}. As for $\partial_xX_1$ (predator-only case) we have $\dot{N}=0$ and $d(S+I)/dt\leq-m(S+I)$ so the whole face (axes included) is forward invariant and exponentially attracted to the origin. When there are no susceptible predators ($\partial_yX_1$) we have $\dot{S}=(NI)/(h+N)>0$ hence the face (axes excluded) is mapped through $\pi$ into the interior of $X_1$. A scheme of the boundary flow is shown in Figure \ref{Fig1}.
\begin{figure}[ht]
\centering
\includegraphics[width=\textwidth]{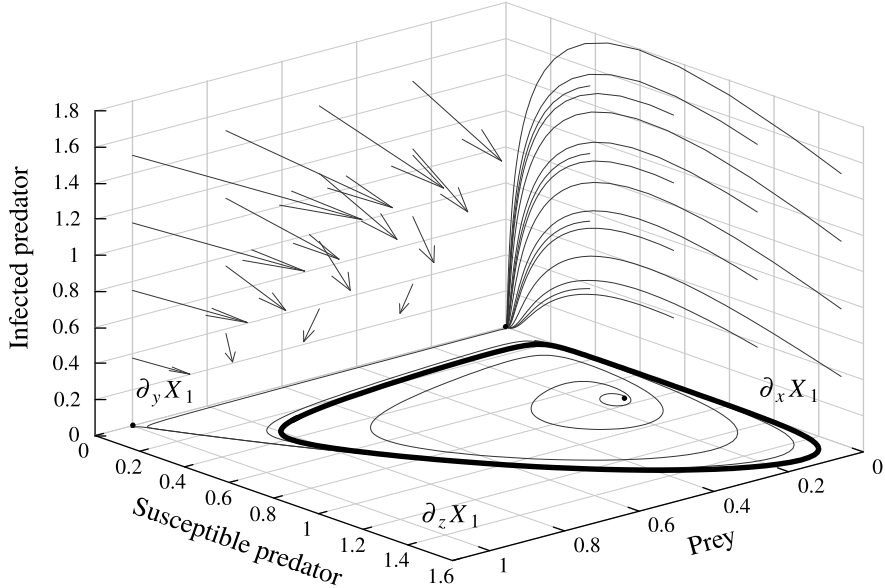}
\caption{The diseased predator model boundary flow. We used the parameters values of \cite{BaHi13}, namely $\mu=0.5,r=2,h=m=0.3$ and $\beta=1.3$. Orbits are displayed for the forward invariant faces $\partial_xX_1$ and $\partial_zX_1$ while for $\partial_yX_1$ we plotted the flow vectors. Equilibria are highlighted in bold.}
\label{Fig1}
\end{figure}

The equilibria lying on the boundary are the origin, the prey-only logistic equilibrium $(1,0,0)$, the disease-free non trivial equilibrium $(N^*, S^*,0)$ and, under the crucial hypothesis
\begin{equation}\label{em}
m<\frac{1-h}{1+h},
\end{equation}
also a limit cycle $\gamma^*$ lying in the disease-free face $\partial_zX_1$ (see Appendix \ref{sec:A}). Appendix \ref{sec:B} is appointed to their stability analysis.

\section{Main result}\label{sec:res}
In this Section we reach the conditions needed for the global uniform persistence of system \eqref{bahipred} by proving repeatedly the repulsiveness of elements of a suitable decomposition of $\partial X_1$, removing them one by one and restricting the analysis to the remaining part of the boundary. The need for this iterative procedure will be explained in Section \ref{sec:dis}.

Let
\begin{displaymath}
M_1:=\{(x,y,z)\in X_1:\ x=0\}.
\end{displaymath}
$M_1$ is a closed and invariant subset of $X_1$. A Morse decomposition for $M_1$ is given by $\mathcal{M}_1=\{(0,0,0)\}$, which is indeed a Morse set and $\omega(M_1)=\mathcal{M}_1$. Now, $W^s(\mathcal{M}_1)=M_1$ (refer to Appendix \ref{sec:B} for the stability analysis): the saddle behaviour of the origin guarantees that it is isolated in $X_1$ and we can hence apply Theorem \ref{repelmorse} to obtain the uniform repulsiveness of $M_1$ in $X_1$. This in turns implies the permanence of $\pi$ on $X_1\setminus M_1$, that is the existence of a compact invariant attractor $K_1\subset X_1\setminus M_1$ such that $d(K_1,M_1)>0$ (see \cite[Theorem 2.2]{Hu84} or \cite[Chapter II.5]{Co78}). 

We can now analyse our flow $\pi$ on
\begin{displaymath}
X_2:=X_1\setminus M_1
\end{displaymath}
which is an open subset of $X_1$, hence a locally compact metric space. Setting
\begin{displaymath}
M_2:=\{(x,y,z)\in X_2:\ y=z=0\}
\end{displaymath}
we see that $M_2$ is forward invariant and closed with respect to $X_2$. $\mathcal{M}_2=\{(1,0,0)\}$ attracts all the points in $M_2$ and is indeed a Morse decomposition for $M_2$. Again, by the stability analysis $W^s(\mathcal{M}_2)=M_2$ and hence Theorem \ref{repelmorse} holds, giving uniform repulsiveness of $M_2$ and a compact invariant attractor $K_2$ inside $X_2\setminus M_2$ such that $d(K_2,M_2)>0$.

Now set
\begin{displaymath}
X_3:=X_2\setminus M_2\,,
\end{displaymath}
which is again a locally compact metric space. We define
\begin{displaymath}
M_3:=\{(x,y,z)\in X_3:\ y=0\}.
\end{displaymath}
$M_3$ is closed in $X_3$ and $X_3\setminus M_3$ is forward invariant. Note that $M_3$ itself is not forward invariant, thus some of the classical results do not apply. Recall that by the previous step $X_3$ contains a compact global attractor $K_2$. The set
\begin{displaymath}
V:=\left\{(x,y,z)\in X_3:\ y\leq\dfrac{xz}{(h+x)(m+\beta z)}\right\}
\end{displaymath}
is a closed neighbourhood of $M_3$ and on $V$ we define the real-valued function
\begin{displaymath}
P(x,y,z):=y.
\end{displaymath}
By definition $P(x,y,z)=0$ if and only if $(x,y,z)\in M_3$ and if $(x_0,y_0,z_0)\in V\setminus M_3$ then
\begin{align*}
\dot{P}(x_0,y_0,z_0)&=\dfrac{x_0z_0}{h+x_0}+y_0\left(\dfrac{x_0}{h+x_0}-m-\beta z_0\right)\\
&>\dfrac{x_0z_0}{h+x_0}-y_0(m+\beta z_0)\\
&\geq\dfrac{x_0z_0}{h+x_0}-\dfrac{x_0z_0}{(h+x_0)(m+\beta z_0)}(m+\beta z_0)=0
\end{align*}
since we are in $V$. Hence, $P$ increases along orbits that have starting points in $V\setminus M_3$, which means
\begin{displaymath}
\forall\, x\in V\setminus M_3\quad\exists\, t_x>0:\quad P(\pi(x,t_x))>P(x).
\end{displaymath}
We are under the hypotheses \textit{a)} and \textit{b)} of Theorem \ref{mainthm} and we obtain that $M_3$ is a uniform repeller in $X_3$. As before, this leads to the existence of a compact global attractor $K_3\subseteq K_1$ inside $X_3\setminus M_3$ such that $d(K_3,M_3)>0$.

Eventually, call
\begin{displaymath}
X_4:=X_3\setminus M_3
\end{displaymath}
and
\begin{displaymath}
M_4:=\{(x,y,z)\in X_4:\ z=0\}.
\end{displaymath}
$M_4\subset X_4$ is closed and globally invariant thanks to the Kolmogorov structure of the third equation of \eqref{bahipred}: by the uniqueness of the Cauchy problem associated with \eqref{bahipred} this implies that $X_4\setminus M_4$ is forward invariant. By the previous step $X_4$ contains a compact global attractor $K_3$. On $X_4$ we define the non negative function
\begin{displaymath}
P(x,y,z):=z
\end{displaymath}
and we note that $P(x,y,z)=0$ if and only if $(x,y,z)\in M_4$. Setting $V=X_4$ it holds
\begin{displaymath}
\dot{P}(x,y,z)=\dot{I}(t)=I(t)(\beta S(t)-(m+\mu))=P(x,y,z)\psi(x,y,z)
\end{displaymath}
with $\psi(x,y,z)=\beta y-(m+\mu)$. In order to satisfy hypothesis \textit{b')} of Theorem \ref{mainthm} we need to check that
\begin{displaymath}
\sup_{t>0}\int_0^t\psi(\pi(x,s))ds>0\qquad\forall\, x\in\overline{\omega(M_4)}.
\end{displaymath}
Since $\overline{\omega(M_4)}=\mathcal{M}_3\cup \mathcal{M}_4$ where $\mathcal{M}_3=\{(N^*,S^*,0)\}$ is the non trivial stationary equilibrium and $\mathcal{M}_4=\gamma^*$ is the stable limit cycle arising from the disease-free model, in the first case the above condition reads $\beta S^*>m+\mu$ which is equivalent to
\begin{displaymath}
R_0^*>1
\end{displaymath}
where
\begin{displaymath}
R_0^*:=\dfrac{\beta S^*}{m+\mu},
\end{displaymath}
while in the second case we have
\begin{equation}\label{supcycle}\tag{$R_0^{\text{sup}}$}
\forall\,(N_0,S_0,0)\in\gamma^*\qquad\sup_{t>0}\int_0^t (\beta S(s)-(m+\mu))ds>0\quad\text{for }S(0)=S_0.
\end{equation}

We are now able to state our main result.
\begin{theorem}\label{persistencepred}
Let system \eqref{bahipred} be defined on the positive orthant $\mathbb{R}_+^3$ with associated flow $\pi$ and let \eqref{supcycle} hold along with
\begin{displaymath}
m<\dfrac{1-h}{1+h},\qquad R_0^*>1.
\end{displaymath}
Then $\pi$ is uniformly persistent.
\end{theorem}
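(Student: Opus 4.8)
The plan is to treat Theorem \ref{persistencepred} as the terminal step of the iterative scheme already assembled in this section. Three of the four pieces $M_1,M_2,M_3$ of the boundary have been shown to be uniform repellers (the first two via the Morse-decomposition criterion of Theorem \ref{repelmorse}, the third via conditions a) and b) of Theorem \ref{mainthm}), each producing a compact global attractor $K_i$ for the flow restricted to the next subspace $X_{i+1}$, with $d(K_i,M_i)>0$. What remains is to establish the repulsiveness of the last set $M_4=\{z=0\}\cap X_4$ under the stated hypotheses, and then to assemble the four local repulsiveness statements into the single conclusion that $\partial X_1$ is a uniform repeller.

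For $M_4$ I would invoke the forward-invariant version of Theorem \ref{mainthm}, i.e.\ condition b$'$). Forward invariance of $M_4$ has already been secured from the Kolmogorov form of the $I$-equation, so b$'$) is available. Condition i) holds with equality: taking $P(x,y,z)=z$ and $\psi(x,y,z)=\beta y-(m+\mu)$ one has $\dot P=P\psi$ identically on $X_4$, so $\dot P\geq P\psi$ trivially and one may set $V=X_4$. The content of condition ii) is the averaged invasion requirement
\[
\sup_{t>0}\int_0^t\psi(\pi(x,s))\,ds>0\qquad\forall\,x\in\overline{\omega(M_4)} .
\]
Here I would use the disease-free analysis of Appendices \ref{sec:A} and \ref{sec:B} to identify $\overline{\omega(M_4)}=\mathcal{M}_3\cup\mathcal{M}_4$, with $\mathcal{M}_3=\{(N^*,S^*,0)\}$ and $\mathcal{M}_4=\gamma^*$: on the planar face $\partial_z X_1$ a Poincar\'e--Bendixson argument forces every orbit to limit onto the equilibrium or the cycle. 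Condition ii) then splits into the two requirements in the hypotheses, since on $\mathcal{M}_3$ the integrand is constant and equal to $\beta S^*-(m+\mu)$, so ii) is precisely $R_0^*>1$, while on $\mathcal{M}_4$ it is exactly \eqref{supcycle}. Theorem \ref{mainthm} then yields that $M_4$ is a uniform repeller in $X_4$ and, as in the earlier steps, a compact global attractor $K_4\subset X_4\setminus M_4=\mathrm{int}(X_1)$ with $d(K_4,M_4)>0$.

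To finish I would assemble the pieces. The four removed sets cover the relevant boundary: a direct check gives $M_1\cup M_2\cup M_3\cup M_4=\partial_x X_1\cup\partial_y X_1\cup\partial_z X_1$, the outer face $\{x+y+z=k\}$ being irrelevant since the flow enters $X_1$. Using the nesting $K_4\subseteq K_3\subseteq K_1$ together with the separations $d(K_i,M_i)>0$ obtained at each stage, the final attractor $K_4$ stays bounded away from each $M_i$, hence from all of $\partial X_1$; setting $\eta>0$ to be a lower bound for $d(K_4,\partial X_1)$ and invoking dissipativity (every interior orbit is eventually drawn into an arbitrarily small neighbourhood of $K_4$) gives $\liminf_{t\to\infty}d(\partial X_1,\pi(x,t))\geq\eta$ for all $x$ in the interior. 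Since $\pi$ eventually enters $X_1$, this transfers to the full orthant and means $\partial X$ is a uniform repeller, i.e.\ $\pi$ is uniformly persistent.

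The main obstacle I expect is the assembly step rather than any single repulsiveness computation. Each $M_i$ is shown to repel only the flow restricted to its own subspace $X_i$, with a separation constant $d(K_i,M_i)$ that a priori depends on the stage; the delicate point is to verify that the nesting of attractors is genuine (in particular that $K_4$ lies inside each earlier $K_i$, or at least stays uniformly away from every $M_j$) so that a single $\eta$ controls the distance to the whole boundary. A secondary but essential point is the identification $\overline{\omega(M_4)}=\mathcal{M}_3\cup\mathcal{M}_4$, which rests on the global phase-plane description of the disease-free subsystem and on hypothesis \eqref{em} guaranteeing the existence of $\gamma^*$; without this characterization condition ii) could not be reduced to the two checkable thresholds $R_0^*>1$ and \eqref{supcycle}.
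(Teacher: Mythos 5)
Your proposal is correct and follows essentially the same route as the paper: the proof there consists precisely of the iterative scheme you reconstruct ($M_1,M_2$ removed via Theorem \ref{repelmorse}, $M_3$ via conditions a)--b) of Theorem \ref{mainthm}, and $M_4$ via b') with $P(x,y,z)=z$, $\psi(x,y,z)=\beta y-(m+\mu)$ and the identification $\overline{\omega(M_4)}=\mathcal{M}_3\cup\gamma^*$, which is exactly where $R_0^*>1$ and \eqref{supcycle} enter), followed by the extension from $X_1$ to the whole orthant using dissipativity and the arbitrariness of $k$ in \eqref{kbound}. Your explicit discussion of the final assembly step (a single $\eta$ from $d(K_4,\partial X_1)>0$) is in fact slightly more careful than the paper, which leaves that point implicit.
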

\begin{proof}
By the previous calculations $\pi$ is dissipative, hence for a fixed $k$ satisfying \eqref{kbound} each orbit $\pi(x,t)$ enters definitively the compact set
\begin{displaymath}
X_1=\{(x,y,z)\in\mathbb{R}_+^3:\ x+y+z\leq k\}.
\end{displaymath}
Condition \eqref{em} guarantees the instability of the stationary equilibrium $(N^*,S^*,0)$ as well as the existence and stability of a limit cycle $\gamma^*$ in $\partial_zX_1$ (note that the lower bound on $k$ guarantees $\gamma^*\subset X_1$). Following the iterative scheme based on Theorems \ref{repelmorse} and \ref{mainthm} illustrated in this Section the two conditions needed for the global uniform persistence of $\pi$ in $X_1$ come to be $R_0^*>1$ and \eqref{supcycle}. Since any orbit entering $X_1$ is thereby trapped and the argument is irrespective of the choice of $k$ if large enough, uniform persistence extends to the whole positive orthant. The proof is complete. 
\end{proof}

\section{Discussion}\label{sec:dis}
Condition \eqref{em} causes a shift of stability in the disease-free plane from the non trivial stationary equilibrium to the limit cycle. The natural thought would be that, since the dynamics is conveyed to the limit cycle instead of the equilibrium point, a repelling condition should be given on $\gamma^*$. This heuristic hypothesis proves indeed true: condition \eqref{supcycle} is the key assumption for uniform persistence, and is given on the points of the limit cycle itself. A further evidence that the focus should be adjusted on the limit cycle instead of the unstable equilibrium $(N^*,S^*,0)$ comes from the flow on the boundary point of view, where the equilibria $\{\mathcal{M}_1,\mathcal{M}_2,\mathcal{M}_3,\mathcal{M}_4\}$ form a Morse decomposition of $\partial X_1$ which is acyclic and ends precisely with the limit cycle:
\begin{displaymath}
\mathcal{M}_1\rightarrow \mathcal{M}_2\rightarrow \mathcal{M}_4 \leftarrow \mathcal{M}_3.
\end{displaymath}

These observations may lead to the conclusion that no condition is needed on $(N^*,S^*,0)$. This proves indeed wrong: in Theorem \ref{persistencepred} we still require $R_0^*>1$ and this hypothesis cannot be avoided in view of the last step of the iterative scheme illustrated above and the stability analysis carried on in the Appendixes. In fact, $R_0^*-1$ is concordant with the third eigenvalue of the Jacobian evaluated in the non trivial equilibrium point (see Appendix \ref{sec:B}): if negative, a one dimensional stable manifold arises that has non empty intersection with the interior of $X_1$, thus meaning that initialising system \eqref{bahipred} with points in int$\,X_1\cap W^s(\{(N^*,S^*,0)\})$ results in the asymptotic extinction of the disease. 

An example of this phenomenon is provided by the system
\begin{align}\label{dufsys}
\begin{split}
\begin{cases}
&\dot{x}=y-\varepsilon\left(\dfrac{x^3}{3}-x\right)\\
&\dot{y}=-x\\
&\dot{z}=\delta\varphi(x^2+y^2)z
\end{cases}
\end{split}
\end{align}
with $\varepsilon,\delta>0$ and $\varphi(s)=\max\{\min\{s-2,0.5\},-1\}$. A van der Pol equation in the Li\'enard plane $(x,y)$ is coupled with a radius-dependent height $z$ which takes into account the two well-known $\omega$-limits of the equation, the unstable origin and a stable limit cycle on the plane that for $0<\varepsilon\ll1$ is close to the circumference $x^2+y^2=4$. The result is shown in Figure \ref{Fig2}. The $z$-axis is the stable one dimensional manifold of the origin: starting arbitrarily close but not on it leads the orbit to approach the $(x,y)$-plane so that the effect of the instability of the origin pushes the orbit towards the limit cycle. The choice of $\varphi$ causes the $z$-component to increase near the limit cycle, so that condition \eqref{supcycle} holds but uniform persistence fails due to the $z$-axis being exponentially attracted to the origin.
\begin{figure}[ht]
\centering
\includegraphics[width=\textwidth]{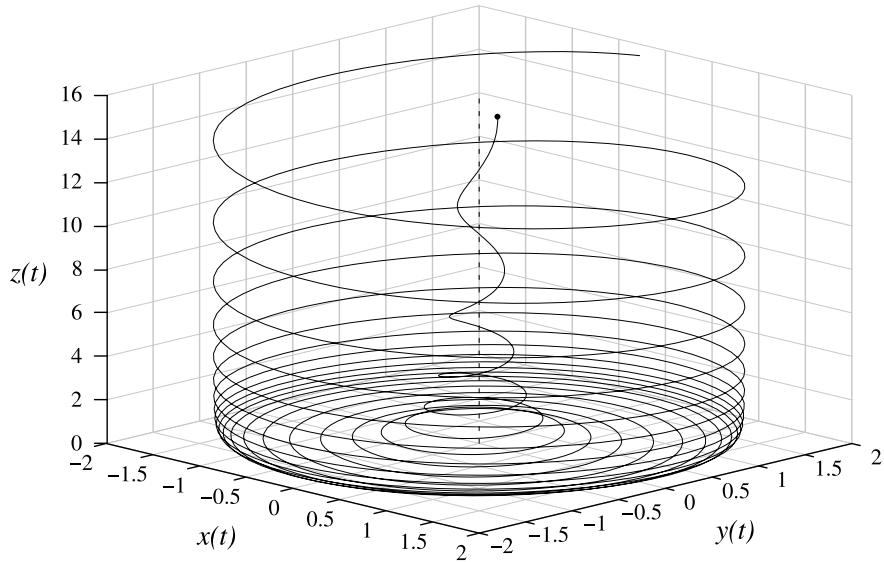}
\caption{An orbit of system \eqref{dufsys} starting close to the $z$-axis, moving away from the unstable origin in proximity of the $(x,y)$-plane and approaching the limit cycle, which is repulsive in the $z$-direction. Parameters: $\varepsilon=\delta=0.1$, $x_0=y_0=0.1$ and $z_0=15$. The time span is $[0,150]\,$.
}
\label{Fig2}
\end{figure}

In analysing numerically the system \eqref{bahipred} Bate and Hilker provide an elegant notion of average basic reproduction number $\overline{R_0}$, based on the limit cycle and similar in structure to $R_0^*$:
\begin{equation}\label{Rnotbar}
\overline{R_0}:=\dfrac{\beta\overline{S}}{m+\mu}\,,
\end{equation}
with $\overline{S}=\int_0^TS(t)dt$ where $T$ is the period of the limit cycle and $S(0)=S(T)\in\gamma^*$. Their condition for the persistence of the disease becomes $\overline{R_0}>1$, which of course implies \eqref{supcycle}. We can thus say that Theorem \ref{persistencepred} is a rigorous theoretical result that supports the numerical evidence provided thereby. We also reach some more degree of generality due to our specific condition on the limit cycle. It is important to stress that although $R_0^*>1$ alone does not guarantee uniform persistence, as shown in \cite{BaHi13} where the disease extinguishes in some situations where $\overline{R_0}<1<R_0^\ast$, it is still a necessary condition in order to avoid the presence of an internal one dimensional stable manifold for the stationary equilibrium. This assumption is not required by Bate and Hilker: of course, in that case persistence holds almost everywhere as only a one dimensional set of initial values leads system \eqref{bahipred} to the asymptotic extinction of the disease; moreover, such set is known theoretically from its tangent line but is hard to compute precisely or to spot by numerical means. Nevertheless it is interesting to note how the necessary condition $R_0^*>1$ arises both from the stability analysis and hypothesis \textit{b')} of Theorem \ref{mainthm}.

An interesting optimality question arises from the comparison between the conditions \eqref{supcycle} and $\overline{R_0}>1$, that is which is the first time $\tau>0$ to realise
\begin{displaymath}
\int_0^\tau\psi(\pi(x,s))ds>0\qquad\forall\, x\in\gamma^*.
\end{displaymath}
For large values of $\beta$ we would expect $\tau<T$ as the limit cycle does not depend on $\beta$ and it is easier for the disease to survive with a high infection rate. However, the dependence on the other parameters is not trivial and may display some interesting behaviours which are difficult to investigate theoretically as little is known about the limit cycle and its properties.

The proof of Theorem \ref{persistencepred} is original in its iterative application of Theorems \ref{repelmorse} and \ref{mainthm} to some carefully chosen decomposition of the boundary, progressively removing those sets for which uniform repulsiveness has been proven. This approach is not redundant in the sense that standard techniques does not apply to such a heterogeneous boundary, which is neither globally forward invariant nor all weakly repulsive. In this setting, the results in \cite{Hu84} and \cite{Fo88} cannot be used on the whole boundary. Even the powerful unifying tool \cite[Corollary 4]{Ho89} fails on the disease-free face as all the equilibria (Morse sets) are involved but $P(x,y,z)=z$ does not satisfy the hypotheses of the Corollary in the origin and in the prey-only equilibrium. To avoid this points we should restrict $P$ to a subset of the disease-free face which is disjoint from the $x$-axis, but then we lose either the forward invariance or the closure of such subset. It is not clear if a different Lyapunov-like function could be defined: it should take into account that each equilibrium has its own instability direction, as shown in Appendix \ref{sec:B}. This is also the reason why Theorem \ref{mainthm} itself cannot be applied on the whole $X_1$. 

We draw the attention on the abstract problem of generalising such an iterative technique. One issue is intrinsic and lies in the choice of the partition and in which order to remove its sets from the boundary. We speculate that this process could follow the acyclic chain of Morse sets, as it is in our case: there are however non trivial issues that could arise, especially when the chain is more involved than the one we showed at the beginning of this Section.
Another question is if Theorem \ref{persistencepred} could be derived following other approaches. System \eqref{bahipred} falls under the class of models (23) of \cite{GaRe16}: we speculate that some of the arguments could be adapted in order to prove the persistence of the disease even if the results therein are given for interactions of competitive or cooperative type, which is not our case.

We conclude by pointing out some possible further extensions of this work. We dealt with one of the two main models proposed in \cite{BaHi13}, the diseased predator one, but some extensions are also thereby analysed, for example the case when both populations share the disease or when the disease alters the density dependence of the infected population. It may be interesting to investigate if the techniques we adopted here can also apply to those models so to reach similar persistence results. Another interesting possibility is to allow the disease transmissibility $\beta$ to vary in time, for example choosing a periodic function describing seasonality: this may link again to the previously cited works \cite{BaMaRe12,GaRe16}.

\appendix
\section{Appendix: Disease-free model}\label{sec:A}
The underlying prey-predator model reads
\begin{align}\label{pred}
\begin{split}
\begin{cases}
&\dot{N}=rN(1-N)-\dfrac{NS}{h+N}=N\left[r(1-N)-\dfrac{S}{h+N}\right]\\
&\dot{S}=\dfrac{NS}{h+N}-mS=S\left[\dfrac{N}{h+N}-m\right]
\end{cases}
\end{split}
\end{align}
where the Kolmogorov-like structure has been highlighted.

We identify three equilibrium points: the origin $(0,0)$, the trivial prey-only equilibrium $(1,0)$ and a non trivial one $(N^*,S^*)$ which is given by
\begin{displaymath}
N^*=\dfrac{mh}{1-m}\qquad S^*=r(1- N^*)(h+ N^*).
\end{displaymath}
The Jacobian of system \eqref{pred} evaluated in the equilibrium points reads
\begin{displaymath}
\mathbf{J}(0,0)=\left(\begin{array}{cc}r&0\\0&-m\end{array}\right),\quad \mathbf{J}(1,0)=\left(\begin{array}{cc}-r&-\dfrac{1}{h+1}\\0&\dfrac{1}{h+1}-m\end{array}\right)
\end{displaymath}
and
\begin{displaymath}
\mathbf{J}( N^*, S^*)=\left(\begin{array}{cc}rm\left(1-\dfrac{1+m}{1-m}h\right)&-m\\r(1-m(1+h))&0\end{array}\right).
\end{displaymath}
The origin is always a saddle. To avoid the stability of the trivial equilibrium it must be
\begin{displaymath}
m<\frac{1}{1+h}
\end{displaymath}
which also ensures 
\begin{displaymath}
\textrm{det}\,\mathbf{J}( N^*, S^*)=mr(1-m(1+h))>0
\end{displaymath}
so that the nontrivial equilibrium is unstable if
\begin{displaymath}
\textrm{tr}\,\mathbf{J}(N^*,S^*)=rm\left(1-\dfrac{1+m}{1-m}h\right)>0\quad\Longleftrightarrow\quad h<\frac{1-m}{1+m}
\end{displaymath}
which is equivalent to \eqref{em}. Three cases are hence given:
\begin{itemize}
\item[$\circ$] $m>1/(1+h)$: the logistic equilibrium is stable
\item[$\circ$] $(1-h)/(1+h)<m<1/(1+h)$: $(1,0)$ is unstable and $(N^*,S^*)$ is stable
\item[$\circ$] $m<(1-h)/(1+h)$: both equilibrium points are unstable.
\end{itemize}
We choose the third hypothesis and prove that a stable and unique limit cycle bifurcates from the unstable equilibrium $(N^*,S^*)$.

First of all note that system \eqref{pred} is dissipative once defined on a set
\begin{displaymath}
X=\{(x,y)\in\mathbb{R}_+^2:\ x+y\leq k\}
\end{displaymath}
with $k$ large enough. We could prove that the flow $\pi$ associated with \eqref{pred} is uniformly persistent by means of Theorem \ref{mainthm} but this comes straightforwardly from \cite[Theorem 3.2(b)]{GaRe16}.

The existence of the limit cycle comes from the Poincar\'e-Bendixson annular region Theorem while its uniqueness is proven in \cite{Ch81}: the stability is a consequence of the uniqueness as shown in \cite{HsHuWa78b} and in the more classical \cite{Le63}. For a general result on the limit cycles of Kolmogorov systems like \eqref{pred} see \cite{YuChDu12}.

\section{Appendix: Stability analysis}\label{sec:B}
The four equilibria on the boundary pointed out in Section \ref{sec:mod} are the origin, the trivial logistic equilibrium $(1,0,0)$, the disease-free equilibrium $( N^*, S^*,0)$ and the limit cycle in the disease-free plane that we named $\gamma^*$. We choose $m$ as in \eqref{em} such that on that plane all critical points are unstable and the limit cycle is stable. We now want to study the stable manifolds of the equilibrium points.

Let us write the linearised matrix for the system \eqref{bahipred}:
\begin{displaymath}
\mathbf{J}(x,y,z)=\left(\begin{array}{ccc}r(1-2x)-\dfrac{h(y+z)}{(h+x)^2}&-\dfrac{x}{h+x}&-\dfrac{x}{h+x}\\\dfrac{h(y+z)}{(h+x)^2}&\dfrac{x}{h+x}-m-\beta z&\dfrac{x}{h+x}-\beta y\\0&\beta z&\beta y-(m+\mu)\end{array}\right).
\end{displaymath}
Evaluating in the equilibrium points:
\begin{displaymath}
\mathbf{J}(0,0,0)=\left(\begin{array}{ccc}r&0&0\\0&-m&0\\0&0&-(m+\mu)\end{array}\right)
\end{displaymath}
\begin{displaymath}
\mathbf{J}(1,0,0)=\left(\begin{array}{ccc}-r&-\dfrac{1}{h+1}&-\dfrac{1}{h+1}\\0&\dfrac{1}{h+1}-m&\dfrac{1}{h+1}\\0&0&-(m+\mu)\end{array}\right)
\end{displaymath}
\begin{displaymath}
\mathbf{J}(N^*,S^*,0)=\left(\begin{array}{ccc}rm\left(1-\dfrac{1+m}{1-m}h\right)&-m&-m\\r(1-m(1+h))&0&m-\beta S^*\\0&0&\beta S^*-(m+\mu)\end{array}\right).
\end{displaymath}
It is easy to see that
\begin{displaymath}
W^s(\{(0,0,0\})\cap X_1=\partial_x X_1=\{(x,y,z)\in X_1:\ x=0\}.
\end{displaymath}
As for $(1,0,0)$, the tangent plane to its stable manifold is given by
\begin{displaymath}
k_1(1,0,0)+k_2\left(\dfrac{\mu}{(1+\mu(h+1))(m+\mu-r)},-\dfrac{1}{1+\mu(h+1)},1\right),\quad k_1,k_2\in\mathbb{R}
\end{displaymath}
which lies strictly outside the positive orthant except for the points in $\{(x,y,z)\in X_1:\ y=z=0\}$, all belonging to the stable manifold of $(1,0,0)$ except for the origin. It holds $W^s(\{(1,0,0)\})\cap\text{int}\,X_1=\emptyset$ because an orbit belonging to $W^s(\{(1,0,0)\})$ should approach $(1,0,0)$ tangentially to the above plane but this is impossible from the inside of $X_1$ as its boundary is either forward invariant ($\partial_z X_1, \partial_x X_1$) or repulsive ($\partial_y X_1$). Thus
\begin{displaymath}
W^s(\{(1,0,0)\})\cap X_1=\{(x,y,z)\in X_1:\ x>0,\ y=z=0\}.
\end{displaymath}

The first two eigenvalues $\lambda_1,\lambda_2$ for the non trivial equilibrium $(N^*,S^*,0)$ are strictly positive, as illustrated in Appendix \ref{sec:A}. $\lambda_3=\beta S^*-(m+\mu)$ could in principle possess a one dimensional stable manifold if
\begin{displaymath}
R_0^*:=\dfrac{\beta S^*}{m+\mu}<1.
\end{displaymath}
The tangent line in $(N^*,S^*,0)$ to this manifold is
\begin{displaymath}
(N^*,S^*,0)+k\left(\dfrac{\mu m}{f(\lambda_1,\lambda_2,\lambda_3)},\dfrac{\mu(\lambda_1+\lambda_2-\lambda_3)}{f(\lambda_1,\lambda_2,\lambda_3)},1\right),\quad k\in\mathbb{R}
\end{displaymath}
where $f(\lambda_1,\lambda_2,\lambda_3)=(\lambda_1+\lambda_2)\lambda_3-(\lambda_1\lambda_2+\lambda_3^2)$. The line cuts through the disease-free face, hence the stable manifold passes through the interior of $X_1$ and can potentially lead to the extinction of the disease. To avoid this chance we ask for
\begin{displaymath}
R_0^*>1.
\end{displaymath}

\section*{Acknowledgements}
The author is truly grateful to prof. Fabio Zanolin without whose supervision this work wouldn't have been accomplished. The author wishes to thank also prof. Carlota Rebelo and prof. A\-les\-san\-dro Mar\-ghe\-ri for the helpful remarks and even more for suggesting the paper \cite{BaHi13} which introduced our main subject of investigation.

Work partially supported by Grup\-po Na\-zio\-na\-le per l'Anali\-si Ma\-te\-ma\-ti\-ca, la Pro\-ba\-bi\-li\-t\`{a} e le lo\-ro Appli\-ca\-zio\-ni (GNAMPA) of Isti\-tu\-to Na\-zio\-na\-le di Al\-ta Ma\-te\-ma\-ti\-ca (INdAM). Progetto di Ricerca 2017: ``Problemi differenziali con peso indefinito: tra metodi topologici e aspetti dinamici''.

\end{document}